\newtheorem{lemma}{Lemma}
\newtheorem{corollary}[lemma]{Corollary}
\newtheorem{proposition}[lemma]{Proposition}
\newtheorem{theorem}[lemma]{Theorem}
\title[]{Minimal relations and the Diophantine Frobenius Problem in embedding dimension three}
\author{Alessio Moscariello}
\subjclass[2010]{11D75, 11D07, 20M14}
\keywords{Frobenius problem, numerical semigroups, minimal relations}
\address[Alessio Moscariello]{Dipartimento di Matematica e Informatica, \ Universit\`a di Catania, \  Viale Andrea Doria 6, 
95125 Catania,Italy}
\email{alessio.moscariello@studium.unict.it}
\begin{document}
\maketitle
\begin{abstract}
This paper provides a formula for the minimal relations and the Frobenius number of a numerical semigroup minimally generated by three pairwise coprime positive integers.
\end{abstract}

\section*{Introduction}

Let $n_1,\ldots,n_\nu$ be a set of positive integers such that $\gcd(n_1,\ldots,n_\nu)=1$. The \emph{Diophantine Frobenius Problem}  (cf. \cite{Br}) asks for an explicit formula that determines the largest integer which cannot be expressed as a linear combination with nonnegative integer coefficients of $n_1,\ldots,n_\nu$, usually called \emph{Frobenius number} and denoted by $F(n_1,\ldots,n_\nu)$. This problem dates back to the end of the nineteenth century: a classical result of Sylvester (cf. \cite{S}) solved the problem for $\nu=2$, giving the well-known formula $F(n_1,n_2)=n_1n_2-n_1-n_2.$
However, as $\nu$ grows the problem becomes incredibly complicated: in fact today the problem is still open for $\nu \ge 3$ (\cite{RA} is an excellent monograph in that regard). The case $\nu=3$ has been the main subject of a wide number of papers, and many formulas for particular triples have been found, although the main case still remains unsolved. A noteworthy result of Curtis (cf. \cite{C}) states that it is impossible to find a polynomial formula for $F(n_1,n_2,n_3)$.

Several results (cf. \cite{J}, \cite{RA}, \cite{RG}) established a simple relation between the Frobenius number $F(n_1,n_2,n_3)$ and the  \emph{minimal relations} between $n_1,n_2,n_3$, i.e. the smallest positive integers $c_i$ such that $c_in_i$ is representable as $c_in_i=\lambda_jn_j+\lambda_kn_k$, where $\lambda_j, \lambda_k \in \mathbb{N}$; therefore, the attention is generally focused on finding formulas or algorithms for the minimal relations $c_1,c_2,c_3$. Several algorithms are present in the literature (cf. \cite{M}, \cite{RV}), which compute these relations with the same complexity as Euclid's greatest common divisor algorithm.\\
The main result of this short paper is a generic formula for the minimal relations, which determines $c_i$ as the minimum element of a certain finite set. Then, a simple application of this result provides a (terrible) non-polynomial formula for $F(n_1,n_2,n_3)$ in function of the minimal generators $n_1,n_2,n_3$ which involves the operators $\min,\max, \lfloor \cdot \rfloor, \lceil \cdot \rceil$ and the remainder operator $[\cdot]_\cdot$, which partly explains its irregular behaviour. 

In Section 1 we recall several preliminary results, in order to define our context and give the necessary background, while Section 2 is devoted to prove the main result (Theorem \ref{main}). At the end of this Section, the formula for $F(n_1,n_2,n_3)$ is stated (Theorem \ref{iterfrob}). Finally, we conclude this work with some considerations on the results obtained.
\section{Preliminaries}
Let $\mathbb{N}$ denote the set of nonnegative integers. A \emph{numerical semigroup} is a submonoid $\mathcal{S}$ of $(\mathbb{N},+)$ such that $\mathbb{N} \setminus \mathcal{S}$ is finite. Each numerical semigroup admits a finite set of minimal generators $\mathcal{G}$ (we use the standard notation $\mathcal{S}=\langle \mathcal{G} \rangle$), and it is easy to deduce that the condition on $\mathbb{N} \setminus \mathcal{S}$ is equivalent to $\gcd(\mathcal{G})=1$. The \emph{Frobenius number} of a numerical semigroup $\mathcal{S}$ is the largest element of $\mathbb{N}\setminus \mathcal{S}$, and is commonly denoted by $F(\mathcal{S})$. 

The first Theorem we need is based on the classical result of Sylvester (cf. \cite{S},\cite{RG}), and studies numerical semigroups minimally generated by two positive integers.

\begin{theorem}\label{class}
Let $n_1,n_2$ be two coprime positive integers greater than $2$, and let $S=\langle n_1,n_2 \rangle$. Then $$F (\mathcal{S}) = n_1n_2-n_1-n_2.$$
Moreover, if $m \in \mathbb{N} \setminus \mathcal{S}$, then $F(\mathcal{S})-m \in \mathcal{S}$.
\end{theorem}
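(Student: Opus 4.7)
The plan is to exploit the standard fact that, since $\gcd(n_1,n_2)=1$, every integer $N$ admits a unique representation
\[
N = a n_1 + b n_2, \qquad 0 \le a \le n_2-1, \; b \in \mathbb{Z},
\]
and that $N \in \mathcal{S}$ if and only if $b \ge 0$ in this representation. The forward direction of this equivalence is clear, and the reverse follows because any other representation $N = a'n_1+b'n_2$ must satisfy $a' = a + k n_2$, $b' = b - k n_1$ for some $k \in \mathbb{Z}$; if $a' \ge 0$ and $a' \ne a$ then $a' \ge n_2 > a$, forcing $k \ge 1$, hence $b' \le b - n_1 < 0$.

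For the Frobenius number, I would first check that $F := n_1n_2 - n_1 - n_2$ is not in $\mathcal{S}$: its canonical representation is $F = (n_2-1)n_1 + (-1)n_2$, and the criterion above with $b = -1 < 0$ shows $F \notin \mathcal{S}$. Next, for any $N > F$, write $N = a n_1 + b n_2$ with $0 \le a \le n_2-1$; then
\[
b n_2 = N - a n_1 > (n_1 n_2 - n_1 - n_2) - (n_2-1)n_1 = -n_2,
\]
so $b > -1$, i.e.\ $b \ge 0$, and thus $N \in \mathcal{S}$. This establishes $F(\mathcal{S}) = F$.

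For the symmetry property, let $m \in \mathbb{N}\setminus \mathcal{S}$ and write $m = a n_1 + b n_2$ in canonical form. Since $m \notin \mathcal{S}$ we have $b \le -1$, and since $0 \le a \le n_2 -1$ we have $n_2 - 1 - a \ge 0$. A direct computation gives
\[
F(\mathcal{S}) - m = (n_1n_2 - n_1 - n_2) - (a n_1 + b n_2) = (n_2 - 1 - a)\, n_1 + (-1 - b)\, n_2,
\]
and both coefficients on the right are nonnegative, so $F(\mathcal{S}) - m \in \mathcal{S}$.

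There is no real obstacle: the whole argument rests on the canonical residue representation modulo $n_2$, and once that is in hand both statements are one-line verifications. The only thing to be careful about is to justify uniqueness of the representation with $0 \le a \le n_2-1$, which is what makes the equivalence ``$N \in \mathcal{S} \iff b \ge 0$'' rigorous; the hypothesis $n_1,n_2 > 2$ plays no role beyond ensuring non-triviality of $\mathcal{S}$.
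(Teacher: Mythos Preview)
Your proof is correct and is the standard argument via the unique residue representation modulo $n_2$. The paper does not supply its own proof of this theorem: it is stated in the Preliminaries as the classical result of Sylvester, with citations, so there is nothing to compare your approach against.
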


This result plays, surprisingly, a crucial role in our investigation; in particular, the implication $m \not \in \mathbb{N} \setminus \mathcal{S} \Longrightarrow F(\mathcal{S})-m \in \mathcal{S}$ provides the basic idea of our approach.

Now, let $n_1,n_2,n_3$ be positive integers such that $\gcd(n_1,n_2,n_3)=1$, and consider the numerical semigroup $\mathcal{S}=\langle n_1,n_2,n_3 \rangle$. From now on, we assume that ${n_1,n_2,n_3}$ is the minimal system of generators for $\mathcal{S}$ (i.e. each $n_i$ is not representable as a linear combination of the other two). The following Proposition reduces our problem to the case of pairwise coprime integers $n_1,n_2,n_3$.

\begin{proposition}[\protect{\cite[Lemma 2.16]{RG}}]\label{coprime}
Let $\mathcal{S}$ be a numerical semigroup minimally generated by three positive integers $n_1,n_2,n_3$ such that $\gcd(n_1,n_2,n_3)=1$. Let $d=\gcd(n_1,n_2)$, and consider the numerical semigroup $\mathcal{T}=\langle \frac{n_1}{d},\frac{n_2}{d},n_3 \rangle$. Then $$F(\mathcal{S})=dF(\mathcal{T})+(d-1)n_3.$$
\end{proposition}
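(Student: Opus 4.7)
The plan is to compute $F(\mathcal{S})$ and $F(\mathcal{T})$ by comparing their Apéry sets with respect to the common generator $n_3$. The first observation, which is essential, is that $\gcd(d,n_3)=1$: any prime dividing both would divide $\gcd(n_1,n_2,n_3)=1$. Recall Selmer's identity $F(\mathcal{H})=\max \mathrm{Ap}(\mathcal{H},n)-n$ for any numerical semigroup $\mathcal{H}$ and any nonzero $n\in\mathcal{H}$, where $\mathrm{Ap}(\mathcal{H},n)$ collects the smallest element of $\mathcal{H}$ in each residue class modulo $n$.

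Setting $m_i=n_i/d$ and $\mathcal{U}=\langle m_1,m_2\rangle\subseteq\mathbb{N}$, every $s\in\mathcal{S}$ may be written uniquely in the form $s=de+cn_3$ with $e\in\mathcal{U}$ and $c\in\mathbb{N}$, while every $t\in\mathcal{T}$ has the form $t=e+cn_3$ with $e\in\mathcal{U}$ and $c\in\mathbb{N}$. For each $j\in\{0,\dots,n_3-1\}$, let $\varepsilon_j$ denote the smallest element of $\mathcal{U}$ congruent to $j$ modulo $n_3$. The elementary but decisive step is that, to minimize $de+cn_3$ (resp.\ $e+cn_3$) subject to $e\in\mathcal{U}$, $c\geq 0$, and a prescribed residue of the sum modulo $n_3$, one is forced to take $c=0$ and $e=\varepsilon_{\star}$ for the appropriate $\star$, simply because $e\geq \varepsilon_{\star}$ and $cn_3\geq 0$ separately. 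Consequently the minimum of $\mathcal{T}$ in residue $j$ modulo $n_3$ is $\varepsilon_j$, while the minimum of $\mathcal{S}$ in residue $k$ modulo $n_3$ is $d\,\varepsilon_{kd^{-1}}$, where $d^{-1}$ is taken modulo $n_3$ (this is where coprimality of $d$ and $n_3$ is used).

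Since $k\mapsto kd^{-1}$ is a bijection on $\mathbb{Z}/n_3\mathbb{Z}$, we obtain the set equality
\[
\mathrm{Ap}(\mathcal{S},n_3)\;=\;d\cdot \mathrm{Ap}(\mathcal{T},n_3).
\]
Taking maxima and applying Selmer's identity to both semigroups yields
\[
F(\mathcal{S})+n_3 \;=\; d\,\max\mathrm{Ap}(\mathcal{T},n_3) \;=\; d\bigl(F(\mathcal{T})+n_3\bigr),
\]
which rearranges to the claimed formula. There is no real obstacle beyond recognizing that scaling by $d$ sets up the correct bijection between the two Apéry sets, a fact that hinges crucially on $\gcd(d,n_3)=1$; the rest is residue-class bookkeeping and a single application of Selmer's formula.
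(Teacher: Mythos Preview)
The paper does not supply its own proof of this proposition; it is quoted from \cite[Lemma 2.16]{RG} as a preliminary reduction. Your Ap\'ery-set argument is the standard one and is correct in substance: the key identity $\mathrm{Ap}(\mathcal{S},n_3)=d\cdot\mathrm{Ap}(\mathcal{T},n_3)$ follows exactly as you describe, and Selmer's formula finishes the job.

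One small inaccuracy: the word ``uniquely'' in the sentence ``every $s\in\mathcal{S}$ may be written uniquely in the form $s=de+cn_3$'' is false (for instance $s=n_1n_3=d(m_1n_3)+0\cdot n_3=d\cdot 0+n_1\cdot n_3$ gives two such representations), and likewise for $\mathcal{T}$. Fortunately your argument never uses uniqueness---you only minimize over all such representations---so simply drop that word. It is also worth remarking explicitly that $\gcd(m_1,m_2)=1$ (since $d=\gcd(n_1,n_2)$), which guarantees that $\mathcal{U}$ meets every residue class modulo $n_3$ and hence that each $\varepsilon_j$ is well defined.
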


Let $\{i,j,k\} = \{1,2,3\}$. Assuming that $n_1,n_2,n_3$ are pairwise coprime, the sets $\langle n_j,n_k \rangle$ are numerical semigroups.
Define the sets $$\mathcal{S}_i = \{M \in \mathbb{N} \setminus \{0\} \ | \ Mn_i \in \langle n_j, n_k \rangle \},$$ 
and define $c_i = \min \mathcal{S}_i \setminus \{0\}$; the integers $c_i$ are the \emph{minimal relations} discussed in the introduction.

This definition is well-posed, since clearly the product $n_jn_i$ belongs to $\langle n_j,n_k \rangle$; actually, this remark implies $c_i \le \min \{n_j,n_k\}$. Notice that, in this context, the assumption that $n_1,n_2,n_3$ form a minimal system of generators for $\mathcal{S}$ is equivalent to $c_i > 1$, for every $i=1,2,3$. Actually, since the sets $\langle n_j,n_k \rangle$ are numerical semigroups, clearly $Mn_i \in \langle n_j,n_k \rangle$ if $M$ is large enough, i.e. $M \in \mathcal{S}_i$: thus $\mathbb{N} \setminus \mathcal{S}_i$ is finite, and it is easy to check that $\mathcal{S}_i$ is a numerical semigroup.

X It is possible to characterize the sets $\mathcal{S}_i$ as sets of solutions of certain modular inequalities. In the following, we denote by $[m]_n$  the remainder of the Euclidean division of $m$ by $n$.
\begin{proposition}[\protect{\cite[Corollary 2.2]{Mos}}]\label{eq}
	Let $n_1,n_2,n_3$ be pairwise coprime positive integers.
	 
	Then, with the notation fixed above, we have
	$$\mathcal{S}_i=\{ x \in \mathbb{N} \ | \ [xn_in_k^{-1}]_{n_j}n_k \le xn_i \}$$
	for every $\{i,j,k\}=\{1,2,3\}$.
\end{proposition}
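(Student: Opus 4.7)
The plan is to unwind the membership condition $x \in \mathcal{S}_i$ by identifying, for each $x$, the essentially unique ``canonical'' representation of $xn_i$ as an integer combination of $n_j$ and $n_k$, and then reading off when this representation actually has nonnegative coefficients.

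First I would note that $x \in \mathcal{S}_i$ amounts to the existence of $a,b \in \mathbb{N}$ with $xn_i = an_j + bn_k$. Reducing such an identity modulo $n_j$ gives $bn_k \equiv xn_i \pmod{n_j}$; since $\gcd(n_j,n_k)=1$ by hypothesis, $n_k$ is invertible modulo $n_j$, forcing $b$ into the single residue class $b \equiv xn_i n_k^{-1} \pmod{n_j}$. Its distinguished representative in $\{0,1,\ldots,n_j-1\}$ is $b_0 := [xn_i n_k^{-1}]_{n_j}$, and setting $a_0 := (xn_i - b_0 n_k)/n_j$ (a well-defined integer by the choice of $b_0$) yields a canonical integer solution $(a_0,b_0)$.

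Next I would parametrise all integer solutions of $an_j+bn_k=xn_i$ as $(a,b)=(a_0+sn_k,\,b_0-sn_j)$ with $s \in \mathbb{Z}$, which is the standard parametrisation once $\gcd(n_j,n_k)=1$ is known. Since $0 \le b_0 < n_j$, the condition $b \ge 0$ forces $s \le 0$, and consequently $a \le a_0$. It follows that a nonnegative solution exists if and only if the canonical one does, that is, if and only if $a_0 \ge 0$; clearing denominators, this is precisely the inequality $b_0 n_k \le xn_i$ appearing in the statement, which establishes the claimed set equality.

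The argument is essentially bookkeeping, so I do not expect any serious obstacle; the only subtle point is to verify that picking the minimal nonnegative $b_0$ in the residue class simultaneously maximises $a$ among solutions with $b \ge 0$, so that if any valid representation exists then the canonical one already works. One must also keep $n_k^{-1}$ modulo $n_j$ carefully distinguished from $n_j^{-1}$ modulo $n_k$, in order to match the precise form of the inequality as stated.
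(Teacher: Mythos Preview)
Your argument is correct: reducing modulo $n_j$ pins down the unique residue $b_0=[xn_in_k^{-1}]_{n_j}\in\{0,\ldots,n_j-1\}$, the general integer solution is $(a_0+sn_k,\,b_0-sn_j)$, and the observation that $b\ge 0$ forces $s\le 0$ (hence $a\le a_0$) shows that a nonnegative representation exists if and only if $a_0\ge 0$, i.e.\ $b_0n_k\le xn_i$.

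As for comparison with the paper: there is nothing to compare, since the paper does not prove this proposition at all but simply quotes it from \cite{Mos} (it appears there as Corollary~2.2). Your self-contained derivation via the canonical representative of $b$ modulo $n_j$ is exactly the standard way to establish such a characterisation, and it fits cleanly into the present paper's setup. One very minor point: the paper's original definition of $\mathcal{S}_i$ excludes $0$, whereas the set on the right-hand side of the proposition is written over all of $\mathbb{N}$; this harmless discrepancy is already present in the paper and is not an issue with your proof.
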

Several results (cf. \cite{J},\cite{RA},\cite{RG2}) provide simple formulas for $F(\langle n_1,n_2,n_3 \rangle)$ as a function of $n_1,n_2,n_3$, the integers $c_i$ and the coefficients of the linear combination that determines $c_in_i$. The one we will use is a slight variation of a result by Johnson.

\begin{theorem}[\protect{\cite[Theorem 3.4]{Mos}}]\label{altfrob}
	Let $n_1,n_2,n_3 \in \mathbb{Z}^+$ be pairwise coprime integers such that $n_1 > n_2 > n_3 > 1$, and let $\mathcal{S}=\langle n_1,n_2,n_3 \rangle$. Then $$F(\mathcal{S})=c_1n_1 + \max\{ [c_2n_2n_3^{-1}]_{n_1}n_3, [c_3n_3n_2^{-1}]_{n_1}n_2 \}-n_1-n_2-n_3.$$
\end{theorem}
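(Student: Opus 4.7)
My plan is to compute $F(\mathcal{S})$ via the Ap\'ery set $\mathrm{Ap}(\mathcal{S},n_1)=\{w\in\mathcal{S}:w-n_1\notin\mathcal{S}\}$, using the classical identity $F(\mathcal{S})=\max\mathrm{Ap}(\mathcal{S},n_1)-n_1$. Because $\gcd(n_1,n_2)=\gcd(n_1,n_3)=1$, each of the $n_1$ residue classes modulo $n_1$ admits a unique smallest representative in $\mathcal{S}$, and since such a representative cannot use $n_1$ itself (else $w-n_1\in\mathcal{S}$) it can always be written as $an_2+bn_3$ with $a,b\in\mathbb{N}$.

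Writing the minimal relations as $c_1n_1=a_{12}n_2+a_{13}n_3$, $c_2n_2=a_{21}n_1+a_{23}n_3$, and $c_3n_3=a_{31}n_1+a_{32}n_2$, I first establish the \emph{L-shape} description
\[
\mathrm{Ap}(\mathcal{S},n_1)=\{an_2+bn_3:0\le a<c_2,\ 0\le b<c_3\}\setminus\{an_2+bn_3:a_{12}\le a<c_2,\ a_{13}\le b<c_3\}.
\]
Containment in the rectangle $[0,c_2)\times[0,c_3)$ uses the minimal relations for $c_2,c_3$: if $a\ge c_2$, substituting $c_2n_2=a_{21}n_1+a_{23}n_3$ introduces an $n_1$-summand, so $w-n_1\in\mathcal{S}$, contradicting Ap\'ery membership, and analogously for $b\ge c_3$. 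The exclusion of the upper subrectangle follows from the identity $an_2+bn_3=(a-a_{12})n_2+(b-a_{13})n_3+c_1n_1$ when $a\ge a_{12}$ and $b\ge a_{13}$. A cardinality count, combined with the Bresinsky--Herzog identities $c_i=a_{ji}+a_{ki}$ (standard for pairwise coprime minimal $3$-generators) and the injectivity of $(a,b)\mapsto an_2+bn_3$ on the region (which follows from $c_2\le n_3$ and $c_3\le n_2$), confirms that the set hits exactly $n_1$ distinct residue classes.

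Next, since $an_2+bn_3$ is monotone in both coordinates, its maximum over the L-shape is attained at one of the two inner corners $(c_2-1,\,a_{13}-1)$ or $(a_{12}-1,\,c_3-1)$. Proposition~\ref{eq} applied to $M=c_2$ and $M=c_3$ identifies $a_{23}=\beta=[c_2n_2n_3^{-1}]_{n_1}$ and $a_{32}=\gamma=[c_3n_3n_2^{-1}]_{n_1}$; Bresinsky--Herzog then yields $a_{12}=c_2-\gamma$ and $a_{13}=c_3-\beta$. Substituting these expressions and using $c_1n_1=a_{12}n_2+a_{13}n_3$, the two corner values simplify to $c_1n_1+\gamma n_2-n_2-n_3$ and $c_1n_1+\beta n_3-n_2-n_3$ respectively, so that
\[
F(\mathcal{S})=\max\mathrm{Ap}(\mathcal{S},n_1)-n_1=c_1n_1+\max\{\beta n_3,\gamma n_2\}-n_1-n_2-n_3,
\]
which is the stated formula.

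The main obstacle is the L-shape description: although each set-theoretic inclusion is routine, verifying that the L-shape meets each residue class modulo $n_1$ exactly once---equivalently, that its cardinality is precisely $n_1$---depends essentially on the Bresinsky--Herzog identities $c_i=a_{ji}+a_{ki}$. These are classical for minimal presentations of pairwise coprime $3$-generators, but deriving them from scratch would require an independent analysis of the syzygy module of $\{n_1,n_2,n_3\}$, which constitutes the technical heart of the argument.
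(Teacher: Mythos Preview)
The paper does not actually prove this theorem: it is quoted verbatim from \cite[Theorem 3.4]{Mos} as a known result (``a slight variation of a result by Johnson''), so there is no in-paper argument to compare against. Your Ap\'ery-set/L-shape approach is the standard route to formulas of this type and is essentially the Johnson argument the paper alludes to; the computation of the two corner values and the identification $a_{23}=[c_2n_2n_3^{-1}]_{n_1}$, $a_{32}=[c_3n_3n_2^{-1}]_{n_1}$ via Proposition~\ref{eq} are correct.

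One point to tighten: the Bresinsky--Herzog identities $c_i=a_{ji}+a_{ki}$ with all $a_{ji},a_{ki}>0$ hold precisely when $\mathcal{S}$ is \emph{not} a complete intersection. In the complete-intersection case one of the $a_{ij}$ vanishes, the L-shape degenerates to a full rectangle, and your ``two inner corners'' coalesce; the formula still holds (and is in fact easier), but your sketch as written implicitly assumes $a_{12},a_{13}\ge 1$ when locating the corners $(c_2-1,a_{13}-1)$ and $(a_{12}-1,c_3-1)$. Likewise, the step ``if $a\ge c_2$ then substituting $c_2n_2=a_{21}n_1+a_{23}n_3$ introduces an $n_1$-summand'' presupposes $a_{21}\ge 1$, which can fail in the CI case. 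A one-line case split (CI versus non-CI) disposes of this, so the proposal is sound modulo that caveat.
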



\section{Main result}
Let $\mathcal{S}$ be a numerical semigroup minimally generated by three pairwise coprime positive integers $n_1,n_2,n_3$, and let $\{i,j,k\}=\{1,2,3\}$. Our approach will use the classical Theorem \ref{class} to obtain $n_i$ from the Frobenius number $F_i=F(\langle n_j,n_k \rangle)=n_jn_k-n_j-n_k$. Since $n_i \not \in \langle n_j,n_k \rangle$, by Theorem \ref{class} $n_jn_k-n_j-n_k-n_i \in \langle n_j,n_k \rangle$; therefore there exist $A,B \in \mathbb{N}$ such that $n_jn_k-n_j-n_k-n_i=An_j+Bn_k$. From this equation we can deduce that there exist two positive integers $\lambda_{ij},\lambda_{ik}$ such that $n_i=n_jn_k-\lambda_{ij}n_j-\lambda_{ik}n_k$. 
By the hypothesis of pairwise coprimality, we can obtain $\lambda_{ij}, \lambda_{ik}$ from $n_1,n_2,n_3$, using the remainder operator. In fact, we have $-\lambda_{ij}n_j \equiv n_i \pmod{n_k}$, and, since clearly $\lambda_{ij} < n_k$, we get $\lambda_{ij}=[-n_in_j^{-1}]_{n_k}$, and similarly $\lambda_{ik} = [-n_in_k^{-1}]_{n_j}$. In order to simplify the notation, we state our results in function of $\lambda_{ij},\lambda_{ik}$, and will compute them only in the statement of the main results.

The basic idea of our approach is to \emph{travel} from $F_i$ to $n_i$ by subtracting $(\lambda_{ij}-1)n_j$, and then $(\lambda_{ik}-1)n_k$, while studying the behaviour of the sets $\mathcal{S}_i$ during each step. 
We start by subtracting $(\lambda_{ij}-1)n_j$; denote by $N_i$ the integer obtained after this first step, that is, $N_i=n_jn_k-\lambda_{ij}n_j-n_k$. Also, define $\mathcal{T}=\langle n_j,n_k,N_i \rangle$ and denote by $\mathcal{T}_i, \mathcal{T}_j, \mathcal{T}_k$ the analogous of the sets $\mathcal{S}_i, \mathcal{S}_j, \mathcal{S}_k$. Notice that $\gcd(N_i,n_j)=\gcd(n_jn_k-\lambda_{ij}n_j-n_k,n_j)=\gcd(n_k,n_j)=1$ and $\gcd(N_i,n_k)=\gcd(n_jn_k-\lambda_{ij}n_j-n_k,n_j)=\gcd(n_jn_k-\lambda_{ij}n_j-\lambda_{ik}n_k,n_j)=\gcd(n_i,n_k)=1$: the integers $n_j,n_k,N_i$ are thus pairwise coprime, and we fall in the hypothesis of the following Proposition.
\begin{proposition}\label{state}
	Let $N_i,n_j,n_k$ be pairwise coprime positive integers such that $N_i=n_jn_k-\lambda_{ij}n_j-n_k$, with $\lambda_{ij} \in \mathbb{Z}^+$, and let $\mathcal{T}=\langle n_j,n_k,N_i \rangle$.
	Then
	\begin{enumerate}
		\item $c_j=n_k-\lambda_{ij}.$
		\item $c_k=n_j - \left \lfloor \frac{n_k}{n_k-\lambda_{ij}} \right \rfloor $. Moreover, if we write$x=\alpha n_j + \beta$, with $0 < \beta \le n_j$, we have that $x \in \mathcal{T}_k$ if and only if
		$$ \beta \ge \frac{n_i-\alpha n_k}{n_k-\lambda_{ij}}=n_j-\frac{(\alpha + 1)n_k}{n_k-\lambda_{ij}}.$$
	\end{enumerate}
\end{proposition}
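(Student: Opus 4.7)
The plan is to exploit the algebraic identity
\[
N_i + n_k \;=\; n_j(n_k - \lambda_{ij}),
\]
a direct rewriting of $N_i = n_j n_k - \lambda_{ij} n_j - n_k$. This immediately certifies $n_k - \lambda_{ij} \in \mathcal{T}_j$ and already suggests the candidate values appearing in the statement; the bulk of the work is to translate it into matching lower bounds on $c_j$ and $c_k$, and to derive the inequality for $\mathcal{T}_k$.

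For Part (1), the identity above gives the upper bound $c_j \leq n_k - \lambda_{ij}$ directly. For the lower bound, I start from an arbitrary witness $M n_j = a n_k + b N_i$ with $a, b \in \mathbb{N}$, substitute the definition of $N_i$, and rearrange to $(M + b\lambda_{ij})n_j = (a + b(n_j - 1))n_k$. Pairwise coprimality forces $n_k \mid M + b\lambda_{ij}$, so $M + b\lambda_{ij} = s n_k$ for some $s \geq 1$, whence $M = s n_k - b\lambda_{ij}$ and the nonnegativity of $a$ turns into $b \leq s n_j/(n_j - 1)$. The case $s = 1$ allows only $b \in \{0,1\}$, whose minimal $M$ is exactly $n_k - \lambda_{ij}$. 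For $s \geq 2$, writing $s = t(n_j - 1) + u$ with $0 \leq u \leq n_j - 2$, the maximal admissible $b$ is $t n_j + u$, and after substituting (via the auxiliary identity $(n_j - 1)(n_k - \lambda_{ij}) = N_i + \lambda_{ij}$) the extremal $M$ collapses into the nonnegative lattice combination $t N_i + u(n_k - \lambda_{ij})$, which is never smaller than $n_k - \lambda_{ij}$ under the implicit hypotheses.

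For Part (2), the analogous substitution on $x n_k = a n_j + b N_i$ yields $(x + b)n_k = (a + b(n_k - \lambda_{ij}))n_j$, whence $n_j \mid x + b$. Writing $x = \alpha n_j + \beta$ with $0 < \beta \leq n_j$, the smallest admissible $b$ is $b = n_j - \beta$, corresponding to $t := (x + b)/n_j = \alpha + 1$. The constraint $a \geq 0$ then reads $b(n_k - \lambda_{ij}) \leq t n_k$; substituting and using the key identity above simplifies it to
\[
\beta \;\geq\; \frac{N_i - \alpha n_k}{n_k - \lambda_{ij}} \;=\; n_j - \frac{(\alpha + 1)n_k}{n_k - \lambda_{ij}},
\]
the two forms being equal by the same identity. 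Since a larger $b$ only makes the original constraint harder to satisfy, this really is an \emph{iff}. To extract $c_k$, I minimize $x$ subject to this inequality: taking $\alpha = 0$, the least integer $\beta$ with $\beta \geq n_j - n_k/(n_k - \lambda_{ij})$ is $n_j - \lfloor n_k/(n_k - \lambda_{ij}) \rfloor$, which lies in $[1, n_j - 1]$ because $N_i > 0$ is equivalent to $n_k/(n_k - \lambda_{ij}) < n_j$. For any $\alpha \geq 1$ one trivially has $x \geq \alpha n_j + 1 \geq n_j + 1$, strictly larger than the $\alpha = 0$ value, so the minimum is realized at $\alpha = 0$.

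The main technical obstacle will be the lower bound of Part (1): the constraint $b \leq s n_j/(n_j - 1)$ loosens with $s$, so one cannot dispose of $s = 1$ alone, and the clean collapse of the extremal $M$ into $t N_i + u(n_k - \lambda_{ij})$ is what makes the bound transparent. It is precisely at this stage that the implicit assumption that $\{n_j, n_k, N_i\}$ really generates $\mathcal{T}$ minimally is used, in order to conclude that this lattice combination cannot fall below $n_k - \lambda_{ij}$.
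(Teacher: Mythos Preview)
Your proof is correct, and it takes a genuinely different route from the paper's.

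For Part~(2), the paper invokes an external result (Proposition~\ref{eq}, quoted from \cite{Mos}) giving the modular-inequality description $\mathcal{T}_k=\{x:xn_k\ge [xn_kN_i^{-1}]_{n_j}N_i\}$, and then simplifies using $N_i\equiv -n_k\pmod{n_j}$ to reach the displayed inequality in two lines. You instead work directly from the definition: starting from $xn_k=an_j+bN_i$, you substitute $N_i=n_jn_k-\lambda_{ij}n_j-n_k$, obtain $(x+b)n_k=(a+b(n_k-\lambda_{ij}))n_j$, and extract the same inequality by choosing the minimal admissible $b=n_j-\beta$ and checking that larger $b$ only tightens the constraint $a\ge 0$. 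This is more elementary and fully self-contained; the paper's version is shorter but outsources the key step to a cited lemma.

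For Part~(1), the paper is extremely terse: after displaying $(n_k-\lambda_{ij})n_j=N_i+n_k$ it simply asserts that ``by the hypothesis of pairwise coprimality'' equality holds. Your argument is a full Diophantine analysis: from $Mn_j=an_k+bN_i$ you derive $M+b\lambda_{ij}=sn_k$ and then, writing $s=t(n_j-1)+u$, collapse the extremal $M$ to $tN_i+u(n_k-\lambda_{ij})$. You are right that the final comparison $tN_i+u(n_k-\lambda_{ij})\ge n_k-\lambda_{ij}$ needs minimal generation: indeed $N_i<n_k-\lambda_{ij}$ is equivalent to $n_k>(n_j-1)N_i>F(\langle n_j,N_i\rangle)$, which would force $n_k\in\langle n_j,N_i\rangle$. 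The paper's one-line claim tacitly relies on the same fact (for instance, $n_j=2$, $n_k=7$, $\lambda_{ij}=3$ gives $N_i=1$ and $c_j=1\ne 4$), so your explicit identification of this hypothesis is an improvement in rigor rather than a gap.
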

\begin{proof}
	\begin{enumerate}	
		\item Clearly $$(n_k-\lambda_{ij})n_j=n_kn_j-\lambda_{ij}n_j=n_i+n_k,$$ thus by the hypothesis of pairwise coprimality it follows that $c_j=n_k-\lambda_{ij}$.
		\item By Proposition \ref{eq} we have $$\mathcal{T}_k = \{x \in \mathbb{N} \ | \ xn_k \ge [xn_kn_i^{-1}]_{n_j}n_i\}.$$ However, $n_i \equiv -n_k \pmod{n_j}$, implying  $[xn_kx_i^{-1}]_{n_j}=[-x]_{n_j}$: therefore, we obtain that $x \in \mathcal{T}_k$ if and only if $xn_k \ge [-x]_{n_j}n_i$. Now, write $x=\alpha n_j + \beta$, with $0 < \beta \le n_j$. It is easy to see that $[-x]_{n_j}=n_j-\beta$, hence $\alpha n_j + \beta \in \mathcal{T}_k$ if and only if $$(\alpha n_j + \beta)n_k \ge (n_j-\beta)n_i \Longrightarrow (n_k+n_i)\beta \ge (n_i-\alpha n_k)n_j.$$ Since $n_i+n_k=n_j(n_k-\lambda_{ij})$ we obtain $$ \beta \ge \frac{n_i-\alpha n_k}{n_k-\lambda_{ij}}=n_j-\frac{(\alpha + 1)n_k}{n_k-\lambda_{ij}},$$
		which is the second part of our thesis. Finally, taking $\alpha = 0$ we deduce $$c_k= n_j - \left \lfloor \frac{n_k}{n_k-\lambda_{ij}} \right \rfloor.$$
	\end{enumerate}
\end{proof}
Notice that, when we subtract multiples of $n_j$, the behaviour of the associated integer $c_j$ is very simple. For this reason, since we have yet to subtract multiples of $n_k$, we focused on the set $\mathcal{T}_k$, in the hope that we can control this set during the second step. 

Luckily, this turns out to be the case. Going back to the general case, the following Proposition links the sets $\mathcal{S}_k$ and $\mathcal{T}_k$.
\begin{proposition}\label{pass}
	With the notation fixed above, we have
	$\mathcal{S}_k = \{x-(\lambda_{ik}-1)[-x]_{n_j} | x \in \mathcal{T}_k\}$.
\end{proposition}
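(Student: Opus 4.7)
The plan is to use Proposition \ref{eq} to characterize membership in $\mathcal{S}_k$ and $\mathcal{T}_k$ via minimal two-generator decompositions, and then shuttle between the two semigroups through the identity $N_i = n_i + (\lambda_{ik}-1)n_k$. The key preliminary observations are the congruences $n_i \equiv -\lambda_{ik}n_k \pmod{n_j}$ (directly from $n_i = n_jn_k - \lambda_{ij}n_j - \lambda_{ik}n_k$) and $N_i \equiv -n_k \pmod{n_j}$; combined with $\gcd(n_j,n_k)=1$, these reduce Proposition \ref{eq} to
\[ x \in \mathcal{T}_k \iff [-x]_{n_j}\, N_i \le x\, n_k, \qquad y \in \mathcal{S}_k \iff [-y\lambda_{ik}^{-1}]_{n_j}\, n_i \le y\, n_k. \]

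For the inclusion $\supseteq$, take $x \in \mathcal{T}_k$ and set $A = [-x]_{n_j}$. The $\mathcal{T}_k$-characterization produces a decomposition $x\, n_k = A\, N_i + B\, n_j$ with $B \in \mathbb{N}$. Substituting $N_i = n_i + (\lambda_{ik}-1)n_k$ and moving the surplus $n_k$-multiple to the left gives $(x - (\lambda_{ik}-1)A)\,n_k = A\,n_i + B\,n_j$, which shows that $y := x - (\lambda_{ik}-1)[-x]_{n_j}$ lies in $\mathcal{S}_k$ (nonnegativity of $y$ follows from the nonnegativity of the right-hand side and $n_k>0$).

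For the reverse inclusion, take $y \in \mathcal{S}_k$ and let $a_0 = [-y\lambda_{ik}^{-1}]_{n_j}$; the $\mathcal{S}_k$-characterization yields $y\, n_k = a_0\, n_i + b_0\, n_j$ with $b_0 \in \mathbb{N}$. Setting $x = y + a_0(\lambda_{ik}-1)$ and substituting back produces $x\, n_k = a_0\, N_i + b_0\, n_j$, so $x \in \mathcal{T}_k$. The decisive step, and the main obstacle, is to verify that $[-x]_{n_j} = a_0$: reducing modulo $n_j$ one finds $-x \equiv -y - a_0(\lambda_{ik}-1) \equiv a_0\lambda_{ik} - a_0(\lambda_{ik}-1) = a_0$, and since $0 \le a_0 < n_j$ this identifies the Euclidean remainder. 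Then $y = x - (\lambda_{ik}-1)[-x]_{n_j}$, as required. The proof therefore reduces to congruence bookkeeping centred on the precise form of $\lambda_{ik}$; all other manipulations are routine algebra on the relation defining $N_i$.
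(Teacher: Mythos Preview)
Your proof is correct and follows essentially the same approach as the paper: both arguments shuttle between decompositions of $xn_k$ in $\langle n_j,N_i\rangle$ and $\langle n_j,n_i\rangle$ via the identity $N_i=n_i+(\lambda_{ik}-1)n_k$, and both identify the relevant $N_i$-coefficient as $[-x]_{n_j}$ using $N_i\equiv -n_k\pmod{n_j}$. The only cosmetic difference is that you invoke Proposition~\ref{eq} to obtain the canonical decomposition (and thereby implicitly use that $\lambda_{ik}$ is invertible modulo $n_j$, which follows from $\lambda_{ik}\equiv -n_in_k^{-1}\pmod{n_j}$ and $\gcd(n_i,n_j)=1$), whereas the paper simply takes any representation $xn_k=An_j+Bn_i$ and normalises to $0\le B<n_j$ directly.
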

\begin{proof}
Take $x \in \mathcal{S}_k$.  Then $xn_k$ can be expressed as a linear combination of $n_j$ and $n_i$, say $xn_k=An_j+Bn_i$, with $A,B \in \mathbb{N}$; actually, we can further assume without loss of generality that $0 \le B < n_j$. Since $N_i - (\lambda_{ik}-1)n_k = n_i$, we get 
\begin{equation}\label{step}
xn_k = A n_j + B n_i \Longleftrightarrow [x+B(\lambda_{ik}-1)]n_k=A n_j + B N_i,
\end{equation}
hence $x+B(\lambda_{ik}-1) \in \mathcal{T}_k$.
Then for every $x \in \mathcal{S}_k$ there exists $x' \in \mathcal{T}_k$ such that $x'n_k=A'n_j + B'N_i$, with $0 \le B' < n_j$ and $x=x'-B'(\lambda_{ik}-1)$. Conversely, it is obvious that if $x' \in \mathcal{T}_k$ is such that $x'n_k=A'n_j + B'N_i$, with $0 \le B' < n_j$, then $x'-B'(\lambda_{ik}-1) \in \mathcal{S}_k$ (notice that $A',B' \in \mathbb{N}$ implies $x'-B'(\lambda_{ik}-1) \in \mathbb{N}$). Therefore, we have obtained the following relation:
$$\mathcal{S}_k=\{x-B(\lambda_{ik}-1) \ | x \in \mathcal{T}_k, xn_k=An_j+BN_i, A \in \mathbb{N},0 \le B < n_j\}.$$

In order to conclude the proof, we deduce the coefficient $B$ of the previous relation in function of $x$: in fact, recalling that $N_i \equiv -n_k \pmod{n_j}$, from $xn_k \equiv B N_i \pmod{n_j}$ we obtain $-x \equiv B \pmod{n_j}$, thus $0 \le B < n_j$ forces $B=[-x]_{n_j}$, which is our thesis.
\end{proof}

This result shows that the final configuration of the set $\mathcal{S}_k$ depends only on the composition of the set $\mathcal{T}_k$ and the number $\lambda_{ik}-1$ of multiples of $n_k$ that divide the two generators $N_i$ and $n_i$. Since the set $\mathcal{T}_k$ is fully determined by Proposition \ref{state}, combining Propositions \ref{state} and \ref{pass} we prove our main result, that is, $c_k$ is the minimum element of a well-defined finite set. We state this result in its full generality.

\begin{theorem}\label{main}
	Let $n_1,n_2,n_3$ be three pairwise coprime positive integers which are the minimal generators of the numerical semigroup $\mathcal{S}=\langle n_1,n_2,n_3 \rangle$. Then, for every $\{i,j,k\}=\{1,2,3\}$, we have $$c_k=\min_{\alpha=1,\ldots,I_k} \left\{\alpha n_j - [-n_in_k^{-1}]_{n_j}\left \lfloor \alpha\frac{ n_k}{[n_in_j^{-1}]_{n_k}} \right \rfloor \right\}$$
	where $I_k= \left \lceil [-n_in_k^{-1}]_{n_j}\frac{[n_in_j^{-1}]_{n_k}}{n_i} \right \rceil$.
\end{theorem}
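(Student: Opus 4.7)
The plan is to mirror the paper's strategy: use Propositions \ref{state} and \ref{pass} to enumerate candidate elements of $\mathcal{S}_k$ indexed by residue classes modulo $n_j$, then cut the enumeration down to $\alpha\in\{1,\ldots,I_k\}$ via a growth estimate. Setting $b:=n_k-\lambda_{ij}=[n_in_j^{-1}]_{n_k}$, the identity $n_jb=n_i+\lambda_{ik}n_k$ (a mere rearrangement of $n_i=n_jn_k-\lambda_{ij}n_j-\lambda_{ik}n_k$) is the algebraic workhorse throughout; the target reads $c_k=\min_{1\leq\alpha\leq I_k}y(\alpha)$ where $y(\alpha):=\alpha n_j-\lambda_{ik}\lfloor\alpha n_k/b\rfloor$ and $I_k=\lceil\lambda_{ik}b/n_i\rceil$. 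Letting $r_\alpha:=[\alpha n_k]_b=[\alpha\lambda_{ij}]_b\in[0,b)$, two reformulations follow by direct computation from the fundamental identity:
\[
b\,y(\alpha)=\alpha n_i+\lambda_{ik}r_\alpha\qquad\text{and}\qquad y(\alpha)\,n_k=\lfloor\alpha n_k/b\rfloor\,n_i+r_\alpha\,n_j.
\]
The second exhibits $y(\alpha)\,n_k$ as a non-negative combination of $n_i$ and $n_j$, so $y(\alpha)\in\mathcal{S}_k$ and hence $y(\alpha)\geq c_k$ for every $\alpha\geq 1$.

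Next I would show that $c_k$ is attained at some $y(\alpha^*)$. For each non-zero residue $\gamma\in\{1,\ldots,n_j-1\}$, set $\beta:=n_j-\gamma$; by Proposition \ref{state}(2), the smallest $x_\gamma\in\mathcal{T}_k$ with $x_\gamma\equiv\beta\pmod{n_j}$ is $x_\gamma=m(\gamma)\,n_j-\gamma$, where $m(\gamma):=\lceil\gamma b/n_k\rceil$. Since $[-x_\gamma]_{n_j}=\gamma$, Proposition \ref{pass} sends $x_\gamma$ to $m(\gamma)n_j-\lambda_{ik}\gamma\in\mathcal{S}_k$. Grouping residues by the common value $m=m(\gamma)$, the image $mn_j-\lambda_{ik}\gamma$ is minimised (within a fibre) at the largest $\gamma$ with $m(\gamma)=m$, which is exactly $\gamma=\lfloor mn_k/b\rfloor$; this fibrewise minimum equals $y(m)$. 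The residue $\gamma=0$ yields the value $n_j$, which exceeds $y(1)=n_j-\lambda_{ik}\lfloor n_k/b\rfloor$ and can be discarded. Therefore $c_k=y(\alpha^*)$ for some $\alpha^*\geq 1$.

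Finally I would cap $\alpha^*\leq I_k$ via the growth estimate
\[
y(\alpha+I_k)-y(\alpha)=\frac{I_kn_i+\lambda_{ik}(r_{\alpha+I_k}-r_\alpha)}{b}\geq\frac{I_kn_i-\lambda_{ik}(b-1)}{b}\geq\frac{\lambda_{ik}}{b}>0,
\]
where the last step uses $I_k\geq\lambda_{ik}b/n_i$, whence $I_kn_i\geq\lambda_{ik}b$. If $\alpha^*>I_k$, then $y(\alpha^*-I_k)<y(\alpha^*)=c_k$ yet $y(\alpha^*-I_k)\in\mathcal{S}_k$, contradicting minimality of $c_k$. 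So $\alpha^*\leq I_k$, and combined with $y(\alpha)\geq c_k$ the claimed formula follows.

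The main obstacle is the residue-class bookkeeping in the middle step: the surjection $\gamma\mapsto m(\gamma)$ has non-trivial fibres, and the precise floor expression $y(m)$ emerges only after identifying the fibrewise maximum $\gamma$ as $\lfloor mn_k/b\rfloor$ and verifying that this choice lies in the correct fibre. The remaining algebra flows cleanly from the identity $n_jb=n_i+\lambda_{ik}n_k$.
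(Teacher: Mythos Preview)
Your argument is correct and follows the same overall architecture as the paper (use Propositions~\ref{state} and~\ref{pass} to identify $c_k=\min_{\alpha\ge 1}y(\alpha)$, then bound the minimising index), but two parts are organised differently and are worth noting.

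First, your direct identity $y(\alpha)\,n_k=\lfloor\alpha n_k/b\rfloor\,n_i+r_\alpha\,n_j$ gives $y(\alpha)\in\mathcal{S}_k$ without appealing to Proposition~\ref{pass}; the paper obtains this direction only implicitly through the bijection of Proposition~\ref{pass}. Second, and more substantially, your bound $\alpha^*\le I_k$ via the uniform growth estimate $y(\alpha+I_k)-y(\alpha)\ge\lambda_{ik}/b>0$ (using $b\,y(\alpha)=\alpha n_i+\lambda_{ik}r_\alpha$) replaces the paper's two-case split, which distinguishes whether $n_j-\lambda_{ik}\lfloor n_k/b\rfloor\ge\lambda_{ik}$ and in one branch shows $y$ is monotone while in the other argues by contradiction. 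Your single-inequality approach is shorter and avoids the case distinction entirely.

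One small wording point: when you say ``the fibrewise minimum equals $y(m)$'', this is literally true only when $\lfloor mn_k/b\rfloor\le n_j-1$; for the top fibre it may be that the largest admissible $\gamma$ is $n_j-1$ rather than $\lfloor mn_k/b\rfloor$. This does not affect the conclusion, since you have already shown $y(m)\in\mathcal{S}_k$ directly: if $c_k$ is attained at some $\gamma^*$ with $m^*=m(\gamma^*)$, then $c_k=m^*n_j-\lambda_{ik}\gamma^*\ge y(m^*)\ge c_k$, forcing equality. It would be slightly cleaner to state the sandwich explicitly rather than assert the fibrewise minimum is $y(m)$.
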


\begin{proof}
	By Proposition \ref{pass} $\mathcal{S}_k = \{x-(\lambda_{ik}-1)[-x]_{n_j} | x \in \mathcal{T}_k\}$. However, 
	from Proposition \ref{state} we have that an integer $\alpha n_j + \beta$, with $\alpha \in \mathbb{N}$, $0 < \beta \le n_j$, belongs to $\mathcal{T}_k$ if and only if 
	\begin{equation}\label{min}
	\beta \ge n_j-\frac{(\alpha + 1)n_k}{n_k-\lambda_{ij}}.
	\end{equation}
	Now, write $x \in \mathcal{T}_k$ as $x=\alpha n_j + \beta$, with $0 < \beta \le n_j$. Then $[-x]_{n_j}=[-\beta]_{n_j}=n_j-\beta$, and \begin{equation}\label{val}
	x-(\lambda_{ik}-1)[-x]_{n_j}=\alpha n_j + \beta - (\lambda_{ik}-1)(n_j-\beta)=(\alpha+1-\lambda_{ik})n_j + \lambda_{ik}\beta.
	\end{equation}
	Therefore, for each interval $\mathcal{I}_\alpha = [\alpha n_j + 1 , \alpha n_j + n_j]$, the element $\tilde{x} = \alpha n_j + \tilde{\beta} \in \mathcal{T}_k \cap \mathcal{I}_\alpha$ such that $x-(\lambda_{ik}-1)[-x]_{n_j}$ is minimal is the one with the smallest admissibile remainder, which is, in light of (\ref{min}), $\tilde{\beta}=n_j-\left \lfloor \frac{(\alpha + 1)n_k}{n_k-\lambda_{ij}} \right \rfloor$.

	Putting this value in (\ref{val}) yields $$x-(\lambda_{ik}-1)[-x]_{n_j}=(\alpha+1-\lambda_{ik})n_j + \lambda_{ik}\left( n_j-\left \lfloor \frac{(\alpha+1)n_k}{n_k-\lambda_{ij}} \right \rfloor\right) = (\alpha + 1)n_j- \lambda_{ik}\left \lfloor \frac{(\alpha+1)n_k}{n_k-\lambda_{ij}} \right \rfloor.$$
	Now, since $c_k = \min \mathcal{S}_k$ we deduce $$c_k = \min \mathcal{S}_k \setminus \{0\}=\min_{\alpha \in \mathbb{N}} \{\min \{x-(\lambda_{ik}-1)[-x]_{n_j} \ | \ x \in \mathcal{T}_k \cap I_\alpha) \} \} =$$ $$= \min_{\alpha \in \mathbb{N}} \left \{ (\alpha + 1)n_j- \lambda_{ik}\left \lfloor \frac{(\alpha+1)n_k}{n_k-\lambda_{ij}} \right \rfloor \right\}=\min_{\alpha \in \mathbb{Z}^+} \left \{ \alpha n_j- \lambda_{ik}\left \lfloor \frac{\alpha n_k}{n_k-\lambda_{ij}} \right \rfloor \right\}.$$
	
	Since $n_k - \lambda_{ij}=n_k-[-n_in_j^{-1}]_{n_k}=[n_in_j^{-1}]_{n_k}$, we are left to prove that if $\tilde{\alpha} \in \mathbb{Z}^+$ is such that $c_k=\alpha n_j- \lambda_{ik}\left \lfloor \frac{\alpha n_k}{n_k-\lambda_{ij}} \right \rfloor$, then $\tilde{\alpha} \le \left \lceil \lambda_{ik}\frac{n_k-\lambda_{ij}}{n_i} \right \rceil$. For this purpose, we distinguish two possible cases:
	\begin{enumerate}
		\item $n_j -  \lambda_{ik}\left \lfloor \frac{n_k}{n_k-\lambda_{ij}} \right \rfloor  \ge \lambda_{ik}$.\\
		Recall that the floor function satisfies $\lfloor A+B \rfloor \le \lfloor A \rfloor + \lfloor B \rfloor + 1$ for every $A,B \in \mathbb{R}^+$. By this property, we have that $$\lambda_{ik}\left \lfloor \frac{(\alpha + 1)n_k}{n_k-\lambda_{ij}} \right \rfloor = \lambda_{ik}\left \lfloor \frac{\alpha n_k}{n_k-\lambda_{ij}}+\frac{n_k}{n_k-\lambda_{ij}} \right \rfloor \le \lambda_{ik}\left \lfloor \frac{\alpha n_k}{n_k-\lambda_{ij}} \right \rfloor + \lambda_{ik}\left \lfloor \frac{n_k}{n_k-\lambda_{ij}} \right \rfloor+\lambda_{ik}.$$
		Therefore, for every $\alpha \in \mathbb{Z}^+$ we obtain 
		$$(\alpha+1) n_j -  \lambda_{ik}\left \lfloor \frac{(\alpha+1) n_k}{n_k-\lambda_{ij}} \right \rfloor \ge (\alpha+1)n_j - \lambda_{ik}\left \lfloor \frac{\alpha n_k}{n_k-\lambda_{ij}} \right \rfloor - \lambda_{ik}\left \lfloor \frac{n_k}{n_k-\lambda_{ij}} \right \rfloor-\lambda_{ik} \ge $$ $$\ge \alpha n_j -  \lambda_{ik}\left \lfloor \frac{\alpha n_k}{n_k-\lambda_{ij}} \right \rfloor,$$ and hence by definition of $c_k$ if follows $\tilde{\alpha}=1 \le \left \lceil \lambda_{ik}\frac{n_k-\lambda_{ij}}{n_i} \right \rceil$.
		\item $n_j -  \lambda_{ik}\left \lfloor \frac{n_k}{n_k-\lambda_{ij}} \right \rfloor < \lambda_{ik}.$\\
		 For every $\alpha \in \mathbb{Z}^+$ we have $$\alpha n_j -  \lambda_{ik}\left \lfloor \frac{\alpha n_k}{n_k-\lambda_{ij}} \right \rfloor \ge \alpha n_j -  \lambda_{ik}\frac{\alpha n_k}{n_k-\lambda_{ij}}=\frac{\alpha n_i}{n_k-\lambda_{ij}}.$$
		Assume now that $\tilde{\alpha} > \left \lceil \lambda_{ik}\frac{n_k-\lambda_{ij}}{n_i} \right \rceil \ge \lambda_{ik}\frac{n_k-\lambda_{ij}}{n_i}$. Then 
		$$c_k=\tilde{\alpha} n_j -  \lambda_{ik}\left \lfloor \frac{\tilde{\alpha} n_k}{n_k-\lambda_{ij}} \right \rfloor \ge \frac{\tilde{\alpha} n_i}{n_k-\lambda_{ij}} \ge \lambda_{ik}.$$
		However, considering $\alpha=1$ we obtain $n_j -  \lambda_{ik}\left \lfloor \frac{n_k}{n_k-\lambda_{ij}} \right \rfloor < \lambda_{ik} \le c_k$, which contradicts the definition of $c_k$.
	\end{enumerate}
\end{proof}

Since we do not place any condition (besides the ones stated at the beginning) on the triple $\{n_i,n_j,n_k\}$, we obtain, for each of the integers $c_1,c_2,c_3$, two similar formulas, according to the choice of the indexes $i,j$. Putting these formulas in Theorem \ref{altfrob} yields an (atrociously long) exact formula for $F(\langle n_1,n_2,n_3 \rangle)$, under the general assumption $n_1 > n_2 > n_3$.
Our choice (which we will motivate in the final Section) goes as follow:
\begin{enumerate}
	\item Deduce $c_1$ with the position $i=3,j=2,k=1$, obtaining $$c_1=\min_{\alpha = 1, \ldots, I_1} \left\{\alpha n_2 -  [-n_3n_1^{-1}]_{n_2}\left \lfloor \frac{\alpha n_1}{[n_3n_2^{-1}]_{n_1}} \right \rfloor\right\},$$ where $I_1 = \left \lceil [-n_3n_1^{-1}]_{n_2}\frac{[n_3n_2^{-1}]_{n_1}}{n_3} \right \rceil$.
	\item Compute $c_2$ with the position $i=3,j=1,k=2$. We have $$c_2=\min_{\beta = 1, \ldots, I_2} \left\{\beta n_1 -  [-n_3n_2^{-1}]_{n_1}\left \lfloor \frac{\beta n_2}{[n_3n_1^{-1}]_{n_2}} \right \rfloor\right\},$$ where $I_2 = \left \lceil [-n_3n_2^{-1}]_{n_1}\frac{[n_3n_1^{-1}]_{n_2}}{n_3} \right \rceil$.
	\item Compute $c_3$ with the position $i=2,j=1,k=3$, obtaining $$c_3=\min_{\gamma=1,\ldots,I_3} \left\{\gamma n_1 -  [-n_2n_3^{-1}]_{n_1}\left \lfloor \frac{\gamma n_3}{[n_2n_1^{-1}]_{n_3}} \right \rfloor \right\},$$ where
	$I_3 = \left \lceil [-n_2n_3^{-1}]_{n_1}\frac{[n_2n_1^{-1}]_{n_3}}{n_2} \right \rceil$.
\end{enumerate}
At this point, Theorem \ref{altfrob} returns the following formula.
\begin{theorem}\label{iterfrob}
Let $\mathcal{S}$ be a numerical semigroup minimally generated by three pairwise coprime positive integers $n_1,n_2,n_3$ such that $n_1 > n_2 > n_3$. Then 

$$F(\mathcal{S})=\min_{\alpha = 1, \ldots, I_1} \left\{\alpha n_2 -  [-n_3n_1^{-1}]_{n_2}\left \lfloor \frac{\alpha n_1}{[n_3n_2^{-1}]_{n_1}} \right \rfloor \right\} n_1 + $$ $$+ \max \bigg\{ \left[\min_{\beta = 1,\ldots, I_2} \left\{\beta n_1 -  [-n_3n_2^{-1}]_{n_1}\left \lfloor \frac{\beta n_2}{[n_3n_1^{-1} ]_{n_2} } \right \rfloor  \right\}n_2n_3^{-1} \right]_{n_1}n_3, $$ $$ \left[\min_{\gamma = 1 ,\ldots, I_3} \left\{\gamma n_1 -  [-n_2n_3^{-1}]_{n_1}\left \lfloor \frac{\gamma n_3}{[n_2n_1^{-1}]_{n_3}} \right \rfloor \bigg\} n_3n_2^{-1} \right]_{n_1}n_2 \right\} -n_1-n_2-n_3,$$ where $I_1 = \left \lceil [-n_3n_1^{-1}]_{n_2}\frac{[n_3n_2^{-1}]_{n_1}}{n_3} \right \rceil$, $I_2 = \left \lceil [-n_3n_2^{-1}]_{n_1}\frac{[n_3n_1^{-1}]_{n_2}}{n_3} \right \rceil$ and $I_3 = \left \lceil [-n_2n_3^{-1}]_{n_1}\frac{[n_2n_1^{-1}]_{n_3}}{n_2} \right \rceil$.
\end{theorem}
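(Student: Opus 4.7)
The plan is to treat Theorem \ref{iterfrob} as a direct corollary: I would combine the formula in Theorem \ref{altfrob} with three separate applications of Theorem \ref{main}, one for each of $c_1, c_2, c_3$. No new conceptual ingredient is required; the only work is choosing compatible index assignments and carrying out the substitution. Since the hypotheses of Theorem \ref{altfrob} (pairwise coprimality, $n_1 > n_2 > n_3 > 1$, minimality of the generators) are precisely those in the statement, the starting point is
\begin{equation*}
F(\mathcal{S}) = c_1 n_1 + \max\bigl\{[c_2 n_2 n_3^{-1}]_{n_1} n_3,\; [c_3 n_3 n_2^{-1}]_{n_1} n_2\bigr\} - n_1 - n_2 - n_3.
\end{equation*}

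Next, for each $k \in \{1,2,3\}$, I would apply Theorem \ref{main} with the index assignment already indicated in the paragraph preceding the statement: $(i,j,k) = (3,2,1)$ to extract $c_1$, $(i,j,k)=(3,1,2)$ to extract $c_2$, and $(i,j,k)=(2,1,3)$ to extract $c_3$. In each case the right-hand side of Theorem \ref{main} is simply read off after the indices are renamed; pairwise coprimality of $n_1,n_2,n_3$ guarantees that all six modular inverses $n_a n_b^{-1} \pmod{n_c}$ appearing in the resulting expressions are well-defined, and since each generator is at least $2$, the ranges $1 \leq \alpha \leq I_1$, $1 \leq \beta \leq I_2$, $1 \leq \gamma \leq I_3$ are nonempty.

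Plugging the three resulting expressions for $c_1, c_2, c_3$ into the displayed formula above yields exactly the right-hand side of Theorem \ref{iterfrob}. The main difficulty, if any, is notational rather than mathematical: keeping track of the six distinct modular inverses and the three upper bounds $I_1, I_2, I_3$ without transposing a pair of indices, and silently using the identity $n_k - \lambda_{ij} = [n_i n_j^{-1}]_{n_k}$ already established inside the proof of Theorem \ref{main}. With that bookkeeping carried out carefully, the proof is complete by substitution.
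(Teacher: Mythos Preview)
Your proposal is correct and matches the paper's own derivation exactly: the paper likewise obtains Theorem \ref{iterfrob} by substituting the three instances of Theorem \ref{main} (with the same index choices $(i,j,k)=(3,2,1),(3,1,2),(2,1,3)$) into Theorem \ref{altfrob}, treating the result as an immediate corollary with no further argument.
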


\section{Conclusions}
Clearly, the formula of Theorem \ref{iterfrob} is mainly a theoretical exercise, whose utility is to directly highlight the complexity of the Diophantine Frobenius problem and the extremely irregular behaviour of $F(\langle n_1,n_2,n_3 \rangle)$; it does not give generic bounds, or asymptotic results. However, it may prove useful as a computable formula that gives $F(\mathcal{S})$ only in function of the generators: in fact, since the modular parts of the form $\lambda_{ij}=[-n_in_j^{-1}]_{n_k}$ repeat themselves in the formula, it may sound preferable to compute them once at the beginning, and use them directly in this formula, rather than compute them each time we recall a function based on Theorem \ref{main}. In particular, notice that the choice that led to Theorem \ref{iterfrob} allows us to compute only four of these $\lambda_{ij}$, rather than six.

On the other hand, Theorem \ref{main} may have practical uses (although it is still susceptible to improvements) and provides some insight on the problem. In general, the integer $c_k$ is defined as the minimum positive integer satisfying a certain modular inequality (cf. Proposition \ref{eq}); in Theorem \ref{main}, this condition disappeared; another advantage is that the set containing $c_k$ is finite and computable. Moreover, by looking at the formula, the most complex part seems to be the computation of the modular moltiplicative inverse of $n_k$ modulo $n_j$, which has the same complexity as the extended Euclidean algorithm. However, since these modular inverses do not involve $\alpha$, they have to be computed only once.  

Moreover, the complexity of the computation of $c_1,c_2,c_3$ can be reduced by using some tricks. For instance, it is possible to rotate the indexes $\{i,j,k\}$ in a way that allows to use the same inverses for two different integers $c_k$ (see the choice that led to Theorem \ref{iterfrob}). Another possible choice is to fix $k$ and then take the choice of $i,j$ such that the number of iterations required is minimal. 

Probably, it is possible to compute the value of $\alpha$ that gives $c_k$, thus improving Theorem \ref{iterfrob}: however, it seems that investigating this $\alpha$ with elementary methods would only lead to an even more convoluted result. At least, continuing in that direction will require a new idea.

In some particular settings, Theorem \ref{main} can be presented in a simplified form. One of these has already been considered in the proof of Theorem \ref{main}.
\begin{corollary}\label{cor}
		With the notation used in Theorem \ref{main}, if $ n_j \ge [-n_in_k^{-1}]_{n_j}\left( \left \lfloor \frac{ n_k}{[n_in_j^{-1}]_{n_k}} \right \rfloor + 1 \right)$, we have $$c_k= n_j - [-n_in_k^{-1}]_{n_j}\left \lfloor \frac{ n_k}{[n_in_j^{-1}]_{n_k}} \right \rfloor.$$
\end{corollary}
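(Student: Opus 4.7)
The plan is minimal: this corollary is nothing but case (1) in the proof of Theorem \ref{main}, restated as a separate result. The hypothesis of the corollary, namely $n_j \ge [-n_i n_k^{-1}]_{n_j}(\lfloor n_k/[n_i n_j^{-1}]_{n_k}\rfloor+1)$, becomes---after the substitutions $\lambda_{ik}=[-n_in_k^{-1}]_{n_j}$ and $n_k-\lambda_{ij}=[n_in_j^{-1}]_{n_k}$ already used at the end of that proof---the inequality
\[
n_j-\lambda_{ik}\left\lfloor\frac{n_k}{n_k-\lambda_{ij}}\right\rfloor\ge\lambda_{ik},
\]
which is precisely the condition distinguishing case (1) from case (2) there.

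In case (1), the proof of Theorem \ref{main} shows, via the elementary estimate $\lfloor A+B\rfloor\le\lfloor A\rfloor+\lfloor B\rfloor+1$, that the map $\alpha\mapsto \alpha n_j-\lambda_{ik}\lfloor\alpha n_k/(n_k-\lambda_{ij})\rfloor$ is non-decreasing on $\mathbb{Z}^+$. Hence its minimum over $\alpha=1,\ldots,I_k$ is attained at $\alpha=1$, giving
\[
c_k=n_j-\lambda_{ik}\left\lfloor\frac{n_k}{n_k-\lambda_{ij}}\right\rfloor=n_j-[-n_in_k^{-1}]_{n_j}\left\lfloor\frac{n_k}{[n_in_j^{-1}]_{n_k}}\right\rfloor,
\]
which is exactly the claimed formula.

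There is no real obstacle: the whole argument is already embedded in the proof of Theorem \ref{main}, and the corollary merely isolates the clean case in which the minimum is witnessed by the first index. The only point worth checking is that $\alpha=1$ actually belongs to the admissible range $\{1,\ldots,I_k\}$ of the minimisation; but $I_k=\lceil \lambda_{ik}(n_k-\lambda_{ij})/n_i\rceil\ge 1$ by positivity of $\lambda_{ik}$, $n_k-\lambda_{ij}$ and $n_i$, so this is automatic.
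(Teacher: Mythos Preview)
Your proposal is correct and matches the paper's own treatment: the paper gives no separate proof of the corollary, merely remarking that it ``has already been considered in the proof of Theorem \ref{main},'' and your argument makes explicit precisely the case (1) computation there. The additional check that $I_k\ge 1$ so that $\alpha=1$ lies in the admissible range is a nice touch that the paper leaves implicit.
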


This last corollary can simplify the computation if $\lambda_{ik},\lambda_{ij}$ are small enough: for instance, if $\lambda_{ij} < \frac{n_k}{2}$ and $\lambda_{ik} < \frac{n_j}{2}$, $n_j -  \lambda_{ik}\left \lfloor \frac{n_k}{n_k-\lambda_{ij}} \right \rfloor = n_j - \lambda_{ik} \ge \lambda_{ik}$, thus we can compute the two minimal relations $c_k$ and (swapping $j$ and $k$) $c_j$ by Corollary \ref{cor}, speeding up the process. 

The hypotheses of Corollary \ref{cor} are not always satisfied (see for example $n_1=12,n_2=11,n_3=7$); but, since $0 < n_i=n_jn_k-\lambda_{ij}n_j-\lambda_{ik}n_k$ we must have either $\lambda_{ij} < \frac{n_k}{2}$ or $\lambda_{ik} < \frac{n_j}{2}$. Thus, since $0 < \lambda_{ij} < n_k$ and $0 < \lambda_{ik} < n_j$, it is easy to understand that Corollary \ref{cor} can be applied for a large number of triples.

\end{document}